\theoremstyle{definition}
\newtheorem{definition}{Definition}
\newtheorem{remark}{Remark}
\newtheorem{proposition}{Proposition}
\newcommand{\notsure}[1]{{\color{black} #1}}
\begin{document}


\title{Thermodynamical Material Networks for Modeling, Planning, and Control of Circular Material Flows}

\author{Federico Zocco, Pantelis Sopasakis, Beatrice Smyth, and Wassim M. Haddad
\thanks{F. Zocco is with the Centre for Intelligent Autonomous Manufacturing Systems, School of Electronics, Electrical Engineering and Computer Science, Queen's University Belfast and with the Research Centre in Sustainable Energy, School of Mechanical and Aerospace Engineering, Queen's University Belfast, Northern Ireland, UK. Email: federico.zocco.fz@gmail.com}
\thanks{Corresponding authors: \emph{Federico Zocco}}
\thanks{P. Sopasakis is with the Centre for Intelligent Autonomous Manufacturing Systems, School of Electronics, Electrical Engineering and Computer Science, Queen's University Belfast, Northern Ireland, UK. Email: p.sopasakis@qub.ac.uk.}
\thanks{B. Smyth is with the Research Centre in Sustainable Energy, School of Mechanical and Aerospace Engineering, Queen's University Belfast, Northern Ireland, UK. Email: beatrice.smyth@qub.ac.uk.}
\thanks{W. M. Haddad is with the School of Aerospace Engineering, Georgia Institute of Technology, Atlanta, GA, USA. Email: wm.haddad@aerospace.gatech.edu.}
}

%

\maketitle

\begin{abstract}
Waste production, carbon dioxide atmospheric accumulation, and dependence on finite natural resources are expressions of the unsustainability of the current industrial networks that supply fuels, energy, and manufacturing products. In particular, circular manufacturing supply chains and carbon control networks are urgently needed. To model and design these and, in general, any material networks, we propose to generalize the approach used for traditional networks such as water and thermal power systems by using compartmental dynamical thermodynamics and graph theory. The key idea is that the thermodynamic compartments and their connections can be added, removed or modified as needed to achieve a circular material flow. The design methodology is explained and its application is illustrated through examples. In addition, we provide a physics-based definition of circularity and, by implementing a nonlinear compartmental control, we strengthen the connection between ``Industry 4.0'' and ``Sustainability''. The paper source code is publicly available\footnotemark{}. \footnotetext{https://github.com/fedezocco/TMNbiometh-SciPy}   
\end{abstract}

\begin{IEEEkeywords}
Compartmental dynamical thermodynamics, graph theory, control systems, material flow design, industrial ecology, circular economy
\end{IEEEkeywords}

\section{Introduction}
While the human population is predicted to reach 8 billion by 2024 and 10 billion by 2056 \cite{owidworldpopulationgrowth}, modern society strives to provide the needed services and products on a large-scale. Any services and products, from the simple piece of paper to the complex graphical processing unit, require the availability of raw materials and the production of energy. As welfare and economic growth rely on material uses, a long-term sustainable management of finite natural resources is increasing in importance \cite{EllenMacArthurOnCE}. 

A natural resource particularly at risk is climate stability, which is mostly being altered by the atmospheric carbon dioxide concentration \cite{NASAclimateChange}. The world emitted 6 billion tonnes of $\text{CO}_2$ in 1950, 22 billion tonnes in 1990, 36 billion tonnes in 2019 (i.e., 6 times the emissions of 1950) and the annual emissions have yet to reach their peak \cite{owidco2andothergreenhousegasemissions}. Therefore, an effective control of this material is necessary to respect the global warming target of 1.5 degrees Celsius compared to the pre-industrial levels established with the Paris Agreement in 2015 \cite{ParisAgreement-UN}.

Along with carbon dioxide, other materials requiring a more efficient management are those accumulating on lands and seas as litter or marine debris such as plastic. For example, the mass of plastic in the Great Pacific Garbage Patch was estimated to be approximately 80,000 tonnes and the mass of plastic entering the ocean each year is 1.15 to 2.41 million metric tonnes \cite{OceanCleanup-GPGP}. In terms of the life-cycle of a material, the status of ``waste'' is at the final stage, hence waste accumulations are issues related to the end of the life-cycle. Similarly, today there are also increasing concerns at the beginning of the material life-cycle, i.e., at the stage of material extraction. Indeed, there are several materials classified as ``critical'' by the European Union \cite{CRMs-EU} and the United States \cite{CRMs-US} whose supply is particularly at risk. Those materials are irreplaceable in clean technologies such as solar panels, wind turbines, electric vehicles and are also used in modern technologies such as smartphones.

To address the issues located both at the beginning and at the end of the life-cycle of materials, the paradigm of ``circular economy'' has gained attention over the last few years. Currently there are multiple definitions of circular economy \cite{kirchherr2017conceptualizing}. In this paper, we focus on the flows and accumulations of materials (e.g., carbon dioxide, gold, plastic, biomethane), and hence for us the adjective ``circular'' means ``closed flow of material''. As a consequence, the expression ``circular economy'' is equivalent to ``economy based on closed flows of materials'', the expression ``measuring the circularity of a supply network'' is equivalent to ``measuring to what extent the flow of material in a supply network is closed'' and the expression ``circulating a material'' means ``closing the flow of a material''. For example,  hydraulic engineers seek to circulate the water by minimizing the leakages in the water network. 

To enhance the modeling, planning, and control of \emph{circular} material flows, initially we looked at the advanced and mature water industry and asked the question: Given that water is just a particular type of material, can we develop a capability of managing other materials as effective as the one we have with water? Then, we observed two key aspects of the design of water networks. Namely, 
\begin{enumerate}
\item{they are designed to be \emph{closed} in order to minimize leakages of material and a mathematical framework that effectively depicts the network architecture is graph theory \cite{deuerlein2008decomposition}; and}
\item{their modeling is based on the first principle of thermodynamics and the mass conservation equation \cite{kaminski2017introduction}.} 
\end{enumerate}
Given the generality of both thermodynamics \cite{haddad2017thermodynamics} and graph theory \cite{bondy1976graph}, in this paper we propose to extend the modeling approach of water supply networks to model the flow of any material leveraging compartmental dynamical thermodynamics \cite{haddad2019dynamical} and graph theory \cite{bondy1976graph}.  

The main contributions of the paper are the following.
\begin{enumerate}
\item{We provide physics-based foundations of material circularity to add clarity to the topic \cite{kirchherr2017conceptualizing} (see Section \ref{sec:CircAndTMNs}).} 
\item{We propose a systematic methodology to model, plan, and control circular flows of materials (see Section \ref{sec:Method}).} 
\item{We illustrate the use of graphs to measure material circularity (see Section \ref{sec:Examples}).}
\item{We illustrate the use of feedback control systems into the design of material flows (see Section \ref{sec:Examples}). By doing this, we strengthen the link between techniques typical of industrial automation and the holistic perspective of industrial ecology required to design closed-loop flows \cite{bakshi2022sustainability}.}
\end{enumerate}

The paper is organized as follows: Section \ref{sec:RelWork} covers related works, Section \ref{sec:CircAndTMNs} defines the key concepts, Section \ref{sec:Method} details the proposed methodology, Section \ref{sec:Examples} provides two examples, and finally Section \ref{sec:Conclusion} concludes.
 
Throughout the paper, vectors and matrices are indicated with bold letters, whereas sets are indicated with calligraphic letters.

\section{Related Work}\label{sec:RelWork}
\textbf{Compartmental and dynamical thermodynamics:} Compartmental thermodynamics refers to the thermodynamic analysis at \emph{equilibrium} of a set of connected machines. An example of compartmental thermodynamics is the simple Rankine cycle, in which a turbine, a boiler, a water pump, and a condenser are connected through pipes. Its invention dates back to 1859. Dynamical thermodynamics, instead, is an emerging topic as it studies the dynamical (i.e., \emph{non-equilibrium}) behavior of systems from a thermodynamic perspective without a focus on the multi-machine nature typical of compartmental thermodynamics. Examples of works are \cite{freitas2020stochastic} for electrical networks, \cite{freitas2021stochastic} for electronic circuits, \cite{gay2018lagrangian} for mechanical systems, and \cite{avanzini2021nonequilibrium,penocchio2021nonequilibrium} for chemical reaction networks. The combination of these two branches of thermodynamics is compartmental dynamical thermodynamics \cite{haddad2019dynamical}.

\textbf{Graph theory and thermodynamics for sustainability:} In the context of circular economy, some graph-based approaches have been proposed recently. Moktadir \emph{et al.} \cite{moktadir2018drivers} used a graph architecture to examine and prioritize the driving factors of sustainable manufacturing practices; in \cite{singh2020managing} and \cite{how2018debottlenecking} a graph architecture is used to analyze the different barriers to the implementation of a circular economy in the mining industry and in a biomass supply chain, respectively. The work of Gribaudo \emph{et al.} \cite{gribaudo2020circular} proposed the use of graphs to model the production of chitin by bio-conversion of municipal waste.

The idea of using thermodynamics for ecological modeling dates back almost thirty years \cite{schneider1994complexity}. In 2011, the authors of \cite{bakshi2011thermodynamics} further extended this vision by proposing thermodynamics as the science of sustainability. With this paper, we aim at clarifying the application of thermodynamic principles for sustainable and circular design.

\textbf{Material flow analysis:} Material flow analysis (MFA) is one of the key techniques developed and used in industrial ecology and circular economy to assess material flows and stocks in urban and natural environments \cite{graedel2019material}. The holistic perspective at the core of MFA had a strong influence on the methodology proposed in this paper. While MFA is mainly based on mass balances \cite{brunner2016handbook}, our methodology extends it by adding dynamical power balances \cite{haddad2019dynamical} and control systems \cite{haddad2011nonlinear}.

\textbf{Control systems in life-cycles:} Nowadays, control theory is a well-established discipline sitting between applied mathematics and engineering. Two selected works from the vast literature in the field are \cite{doyle2013feedback} for linear feedback control and \cite{haddad2011nonlinear} for advanced non-linear methods. Control systems are distributed across the entire life-cycle of products and services, therefore they can play a critical role in the transition from a linear to a circular economy. In this paper, we illustrate the use of control theory into the design of material flows, specifically for bio-methane production.

\section{Circularity and Thermodynamical Material Networks}\label{sec:CircAndTMNs}
Consider a cube of material $\beta$, infinitesimal mass d$m$, density $\rho$, and volume $\text{d}V = \frac{\text{d}m}{\rho}$ as in Fig. \ref{fig:MechDefinition}. Let $\bm{G}(t) = [x_G(t), y_G(t), z_G(t)]^\top$ be the center of mass whose coordinates are written with respect to a fixed reference frame with origin $\bm{O}= [0, 0 ,0]^\top$. In general, $x_G(t)$, $y_G(t)$, and $z_G(t)$ can vary with the time $t$. Let $\bm{p}(t) = \bm{G}(t) - \bm{O}$ be the position vector of the cube center of mass $\bm{G}$.
\begin{definition}[Mechanics-based circularity]\label{def:MechDef}
 The flow of $\beta$ is \emph{mechanically circular} if there exist $t_0 \geq 0$ and $t^* \in (t_0, \infty)$ such that 
\begin{equation}\label{eq:circularityCond}
\bm{p}(t_0) = \bm{p}(t^*), \quad \dot{\bm{p}}(t_0) \neq 0, \quad  t^* > t_0.
\end{equation}     
\end{definition}
\begin{figure}
    \centering
    \includegraphics[width=0.5\textwidth]{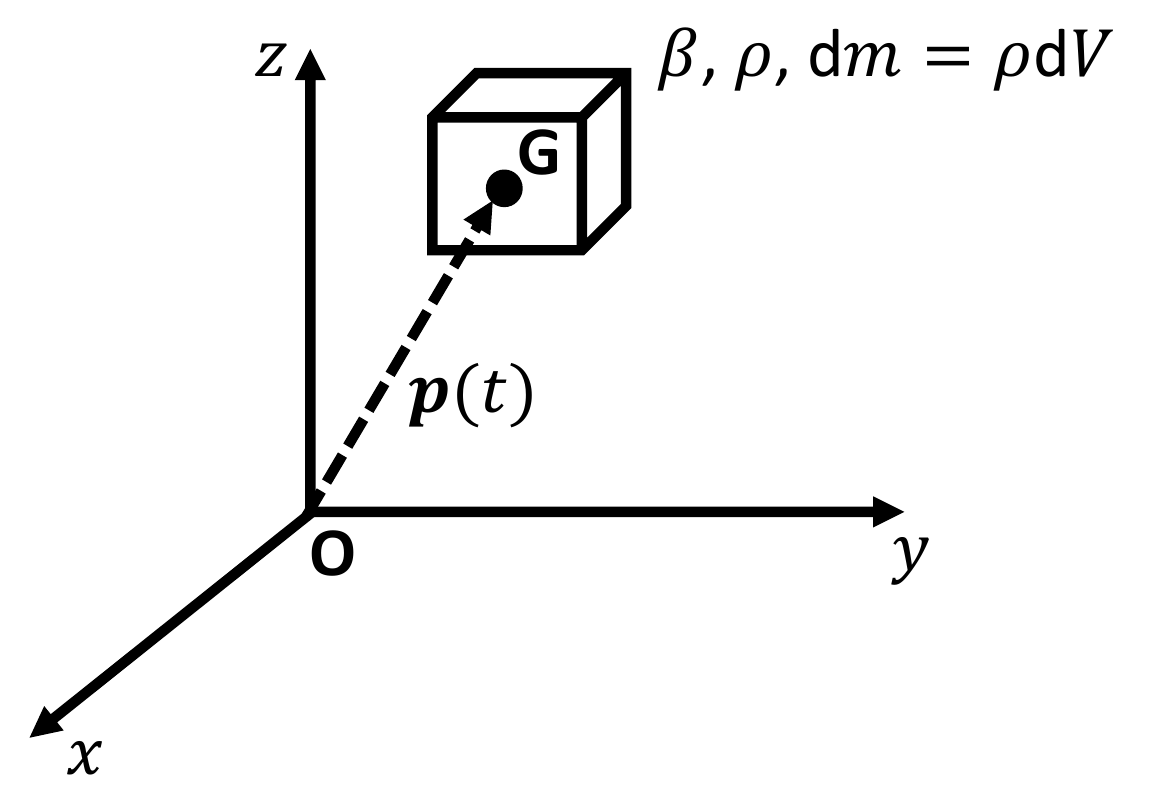}
    \caption{Infinitesimal cube of material $\beta$ and center of mass $\bm{G}$.}
    \label{fig:MechDefinition}
\end{figure}   
\begin{remark}   
As the material $\beta$ in Definition \ref{def:MechDef} is fixed, chemical reactions that modify the material are excluded. Therefore, we refer to  Definition \ref{def:MechDef} as the mechanics-based definition of circularity.
\end{remark}

In thermal engineering, it is standard practice to define a \emph{control volume} that contains the system under study before the application of mass and energy balances. Such a standard practice underpins the design of thermodynamic cycles, e.g., the Rankine and the Brayton cycles, and also the design of hydraulic networks \cite{kaminski2017introduction}. Each control volume identifies a \emph{thermodynamic compartment}. For example, a simple ideal Rankine cycle is made of eight thermodynamic compartments: a feedwater pump, a boiler, a turbine, a condenser, and four pipes connecting these four machines into a closed-loop.  

Now note that mass and energy balances, that is, thermodynamics, are general principles valid for any system \cite{haddad2017thermodynamics,bakshi2011thermodynamics}. Hence, we can generalize the definition of circularity based on mechanics (Definition \ref{def:MechDef}) with the following thermodynamics-based definition.

Let $c^k$ be the $k$-th thermodynamic compartment identified by the $k$-th control volume and let $\beta$ be the material of an infinitesimal cube as in Fig. \ref{fig:MechDefinition}. 
\begin{definition}[Thermodynamics-based circularity]\label{def:ThermoDef}
The flow of $\beta$ is \emph{thermodynamically circular} if there exists an ordered sequence of compartments $\phi = (c^1, \dots, c^k, \dots, c^1)$ processing $\beta$, which begins and ends in $c^1$. Moreover, if some $c^k \in \phi$ chemically transforms a material $\beta_1$ into a material $\beta_2$ and there exists an ordered sequence $\phi$ processing $\mathcal{B} = \{\beta_1, \beta_2\}$, then the flow of the material set $\mathcal{B} = \{\beta_1, \beta_2\}$ is \emph{thermodynamically circular}. More generally, the flow of $\mathcal{B} = \{\beta_1, \dots, \beta_q, \dots, \beta_{n_\beta}\}$ is \emph{thermodynamically circular} if there exists an ordered sequence $\phi$ processing $\mathcal{B}$.            
\end{definition} 
Figure \ref{fig:ThermoDefinition} is an example for $n_{\beta} = 2$. 
\begin{figure}
    \centering
    \includegraphics[width=0.5\textwidth]{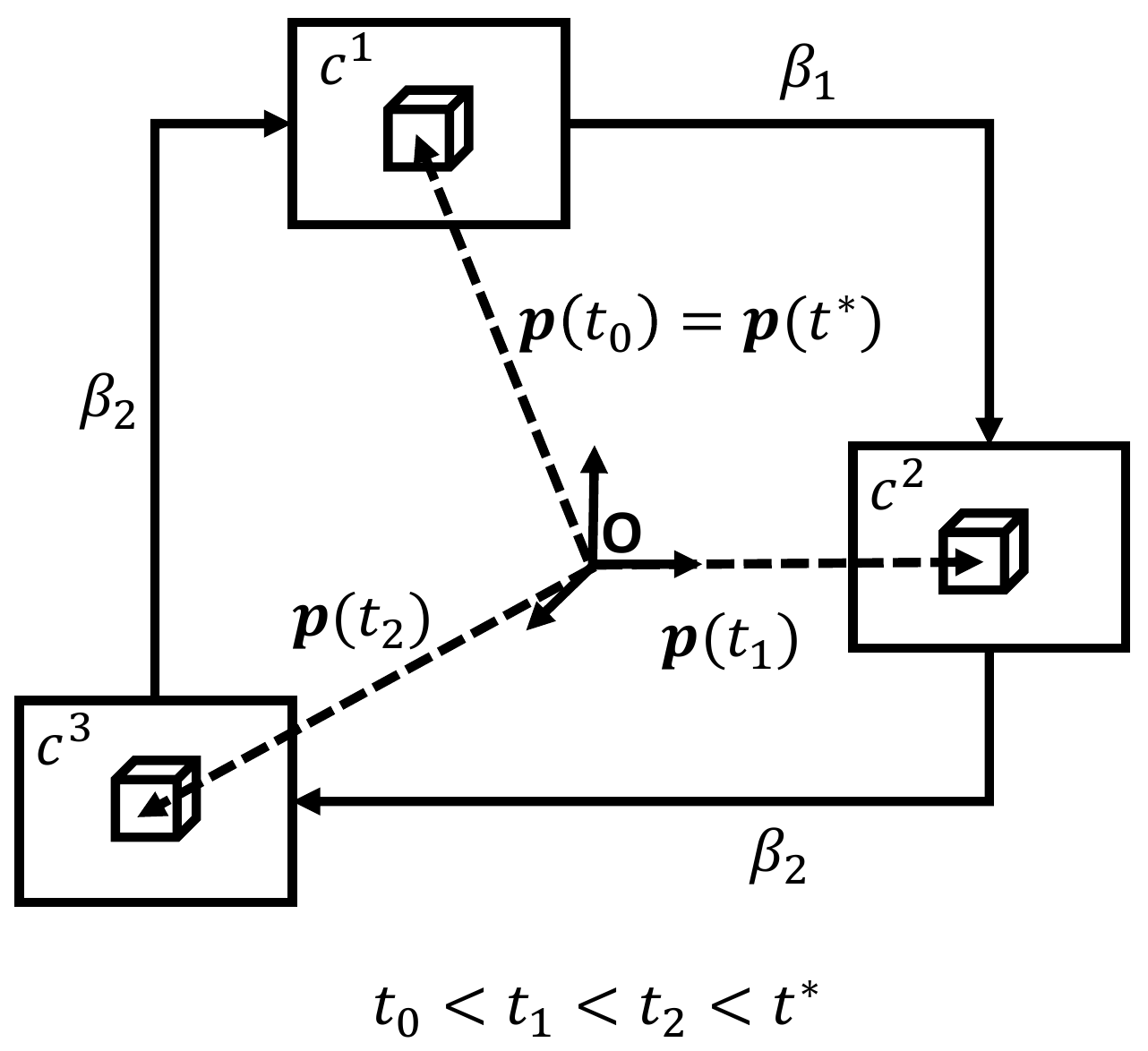}
    \caption{Example of a circular flow of $\mathcal{B} = \{\beta_1, \beta_2\}$. The chemical transformation from $\beta_1$ to $\beta_2$ is in $c^2$.}
    \label{fig:ThermoDefinition}
\end{figure}  

A well-established formalism to represent a network of systems is graph theory \cite{bondy1976graph}: examples of network design theories based on graphs are electrical networks \cite{freitas2020stochastic}, hydraulic networks \cite{deuerlein2008decomposition}, and multiagent systems \cite{mesbahi2010graph}. Since the system in Fig. \ref{fig:ThermoDefinition} can be seen as a network of thermodynamic compartments connected through the material flow, we will use graph theory to formulate the system in Fig. \ref{fig:ThermoDefinition} as a network and then state the definition of a thermodynamical material network.   

\begin{definition}[\hspace{1sp}\cite{bondy1976graph}]
A directed graph $D$ or \emph{digraph} is a graph identified by a set of $n_v$ \emph{vertices} $\{v_1, v_2, \dots, v_{n_v}\}$ and a set of $n_a$ \emph{arcs} $\{a_1, a_2, \dots, a_{n_a}\}$ that connect the vertices. A digraph $D$ in which each vertex or arc is associated with a \emph{weight} is a \emph{weighted digraph}. 
\end{definition}

Let $c^k_{i,j}$ be the $k$-th thermodynamic compartment through which the material flows from compartment $i$ to compartment $j$.
\begin{definition}[Thermodynamical material network]\label{def:TMN}
 A \emph{thermodynamical material network} (TMN) is a set $\mathcal{N}$ of connected thermodynamic compartments, that is, 
\begin{equation}\label{def:TMNset}
\begin{gathered}
\mathcal{N} = \left\{c^1_{1,1}, \dots, c^{k_v}_{k_v,k_v}, \dots, c^{n_v}_{n_v,n_v}, \right. \\ 
\left. c^{n_v+1}_{i_{n_v+1},j_{n_v+1}}, \dots, c^{n_v+k_a}_{i_{n_v+k_a},j_{n_v+k_a}}, \dots, c^{n_c}_{i_{n_c},j_{n_c}}\right\}, 
\end{gathered}
\end{equation}
which transport, store, and transform a set of $n_{\beta}$ materials $\mathcal{B} = \{\beta_1, \dots, \beta_q, \dots, \beta_{n_\beta}\}$ and whose modeling is based on compartmental dynamical thermodynamics \cite{haddad2019dynamical}. 
\end{definition}
Specifically, $\mathcal{N} = \mathcal{R} \cup \mathcal{T}$, where $\mathcal{R} \subseteq \mathcal{N}$ is the subset of compartments that \emph{store}, \emph{transform}, or \emph{use} the target material, and $\mathcal{T} \subseteq \mathcal{N}$ is the subset of compartments that \emph{move} the target material between the compartments belonging to $\mathcal{R} \subseteq \mathcal{N}$. A net $\mathcal{N}$ is associated with its weighted \emph{mass-flow digraph} $M(\mathcal{N})$, which is a weighted digraph whose vertices are the compartments $c^k_{i,j} \in \mathcal{R}$ and whose arcs are the compartments $c^k_{i,j} \in \mathcal{T}$. A vertex also results from the intersection of 3 or more arcs. For vertex-compartments $c^k_{i,j} \in \mathcal{R}$ it holds that $i = j$, whereas for arc-compartments $c^k_{i,j} \in \mathcal{T}$ it holds that $i \neq j$. The weight assigned to a vertex-compartment $c^k_{i,j} \in \mathcal{R}$ is identified by the mass stock $m_k$ within the corresponding compartment, whereas the weight assigned to an arc-compartment $c^k_{i,j} \in \mathcal{T}$ is the mass flow rate $\dot{m}_{i,j}$ from the vertex-compartment $c^i_{i,i} \in \mathcal{R}$ to the vertex-compartment $c^j_{j,j} \in \mathcal{R}$. Hence, the orientation of an arc-compartment $c^k_{i,j} \in \mathcal{T}$ is given by the direction of the material flow. The superscripts $k_v$ and $k_a$ in (\ref{def:TMNset}) are the $k$-th vertex and the $k$-th arc, respectively, while $n_c$ and $n_v$ are the total number of compartments and vertices, respectively. Since $n_a$ is the total number of arcs, it holds that $n_c = n_v + n_a$. 

\begin{definition}[Compartmental diagram]
The \emph{compartmental diagram} of the network (\ref{def:TMNset}) depicts the thermodynamic compartments $c^k_{i,j}$ and the arrows of material flows along with the material class $\beta_q \in \mathcal{B}$.
\end{definition}
\begin{definition}[Compartmental digraph]
The \emph{compartmental digraph} of the network (\ref{def:TMNset}) is a weighted digraph with arcs and vertices labeled with the corresponding compartmental nomenclature $c^k_{i,j}$.  
\end{definition}
Figure \ref{fig:TMN-Example1} shows an example of $\mathcal{N} = \left\{ c^1_{1,1}, c^2_{2,2}, c^3_{1,2}\right\}$ with $\mathcal{B} = \{\beta_1, \beta_2\}$, $n_c = 3$, $n_v = 2$, and $n_a = 1$ depicted using a compartmental diagram (top), a compartmental digraph (middle), and a mass-flow digraph $M(\mathcal{N})$ (bottom).     
\begin{figure}
    \centering
    \includegraphics[width=0.5\textwidth]{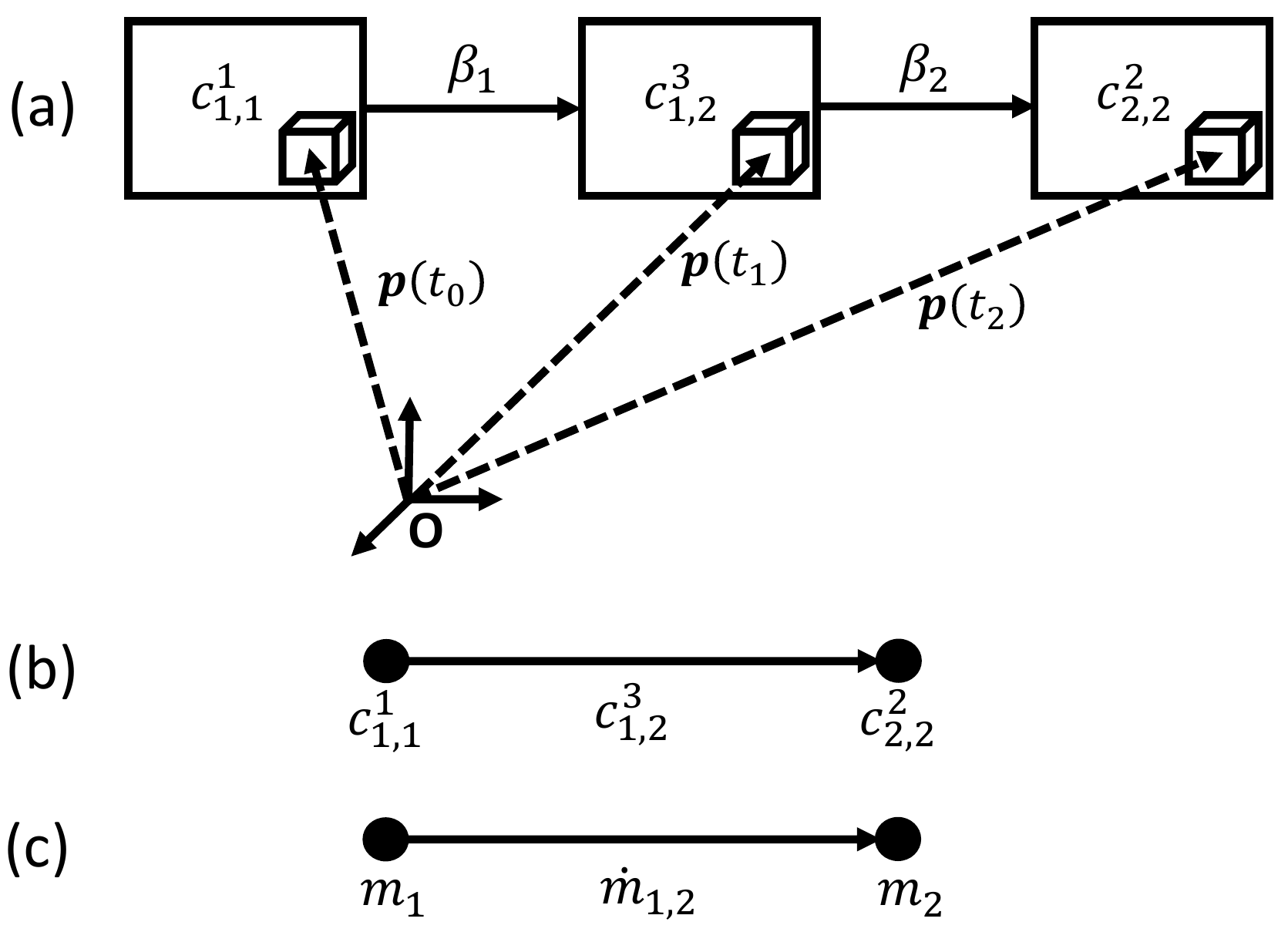}
    \caption{Graphical representations of $\mathcal{N} = \left\{ c^1_{1,1}, c^2_{2,2}, c^3_{1,2}\right\}$: (a) compartmental diagram, (b) compartmental digraph, and (c) mass-flow digraph.}
    \label{fig:TMN-Example1}
\end{figure}

Next, we introduce a few more definitions from graph theory. The reason will be clarified afterwards.
\notsure{
\begin{definition}[\hspace{1sp}\cite{bondy1976graph}]\label{def:directWalk}
A \emph{directed walk} in $D$ is a finite non-null sequence $W = (v_0, a_1, v_1, a_2, \dots, a_l, v_l)$ whose terms are alternatively vertices and arcs such that, for $i = 1, 2, \dots, l$, the arc $a_i$ has \emph{head} $v_i$ and \emph{tail} $v_{i-1}$. The integer $l$ is the \emph{length} of $W$, while the vertices $v_0$ and $v_l$ are the \emph{origin} and the \emph{terminus} of $W$, respectively.   
\end{definition}
\begin{definition}[\hspace{1sp}\cite{bondy1976graph}]
If the sequence of arcs $a_1, a_2, \dots, a_l$ of a directed walk $W$ are distinct, then $W$ is a \emph{directed trial}.
\end{definition}
\begin{definition}[\hspace{1sp}\cite{bondy1976graph}]
A directed trial is \emph{closed} if it has positive length and its origin and terminus are the same, i.e., $v_0 = v_l$.
\end{definition}
\begin{definition}[\hspace{1sp}\cite{bondy1976graph}]
A closed directed trial whose origin and internal vertices are distinct is a \emph{directed cycle} $\phi$.
\end{definition}
}
Summarizing, a directed cycle $\phi$ is a directed walk $W$ (Definition \ref{def:directWalk}) in which the arcs are distinct, the origin and the internal vertices are distinct, the origin and the terminus are the same, and $l > 0$. 

The reason for introducing these definitions is that it is now apparent that the requirement for material circularity (\ref{eq:circularityCond}) translates into requiring that the mass-flow digraph $M(\mathcal{N})$ must be a directed cycle $\phi$. 
\begin{remark}
Consider the network (\ref{def:TMNset}) with a mass-flow digraph $M(\mathcal{N})$. Then, the flow of the set of materials $\mathcal{B} = \{\beta_1, \dots, \beta_q, \dots, \beta_{n_\beta}\}$ is \emph{thermodynamically circular} if $M(\mathcal{N})$ is a directed cycle $\phi$.
\end{remark}
The \emph{mass-flow matrix} $\bm{\Gamma}(\mathcal{N})$ associated with the network (\ref{def:TMNset}) is given by
\begin{equation}\label{eq:gammaDef}
\begin{split}
\bm{\Gamma}(\mathcal{N}) & =
\begin{bmatrix}
\gamma_{1,1} & \dots & \gamma_{1,n_v} \\
    \vdots & \ddots & \vdots \\
\gamma_{n_v,1} &  \dots & \gamma_{n_v,n_v} 
\end{bmatrix}
\\ 
& = 
\begin{bmatrix}
m_1 & \dot{m}_{1,2} & \dots & \dot{m}_{1,n_v} \\
\dot{m}_{2,1} & m_2 & \dots & \dot{m}_{2,n_v} \\
\vdots & \ddots & \ddots & \vdots \\
\dot{m}_{n_v,1} & \dot{m}_{n_v,2} & \dots & m_{n_v} 
\end{bmatrix},
\end{split} 
\end{equation}
whose entries along the diagonal are the weights of the vertex-compartments $c^k_{i,j} \in \mathcal{R}$ (i.e., mass stocks) and whose off diagonal entries are the weights of the arc-compartments $c^k_{i,j} \in \mathcal{T}$ (i.e., mass flow rates). Hence, $\bm{\Gamma}(\mathcal{N})$ is a square matrix of size $n_v \times n_v$ with nonnegative real entries, i.e., $\bm{\Gamma} \in \overline{\mathbb{R}}^{n_v \times n_v}_{+}$. 

The mass conservation principle \cite{kaminski2017introduction} establishes the relationship between the entries of $\bm{\Gamma}(\mathcal{N})$, namely,
\begin{equation}\label{eq:compMassBalance}
\frac{\text{d}}{\text{d}t}m_{k} = \sum_{i =1}^{n_v}\dot{m}_{i,k} - \sum_{j = 1}^{n_v} \dot{m}_{k,j},
\end{equation}
which can be further written in terms of the entries of $\bm{\Gamma}(\mathcal{N})$ as
\begin{equation}
\frac{\text{d}}{\text{d}t}\gamma_{k,k} = \sum_{i =1, i \neq k}^{n_v}\gamma_{i,k} - \sum_{j =1, j \neq k}^{n_v} \gamma_{k,j},
\end{equation}
or, equivalently, in vector form as
\begin{equation}\label{eq:massBalanceVectorialForm}
\begin{split}
\frac{\text{d}}{\text{d}t}\bm{m} & = \frac{\text{d}}{\text{d}t} 
\begin{bmatrix}
m_1 \\
m_2 \\
\vdots \\
m_{n_v}
\end{bmatrix}
= \frac{\text{d}}{\text{d}t}
\begin{bmatrix}
\gamma_{1,1} \\
\gamma_{2,2} \\
\vdots \\
\gamma_{n_v,n_v} 
\end{bmatrix}  
\\ 
& = 
\begin{bmatrix}
\sum\limits_{\substack{i =1 \\ i \neq 1}}^{n_v}\gamma_{i,1} - \sum\limits_{\substack{j =1 \\ j \neq 1}}^{n_v} \gamma_{1,j} \\
\sum\limits_{\substack{i =1 \\ i \neq 2}}^{n_v}\gamma_{i,2} - \sum\limits_{\substack{j =1 \\ j \neq 2}}^{n_v} \gamma_{2,j} \\
\vdots \\
\sum\limits_{\substack{i =1 \\ i \neq n_v}}^{n_v}\gamma_{i,n_v} - \sum\limits_{\substack{j =1 \\ j \neq n_v}}^{n_v} \gamma_{n_v,j}
\end{bmatrix}
.
\end{split}
\end{equation}
Therefore,
\begin{equation}\label{eq:constraintOfStocks}
\begin{split}
\bm{m} & =
\begin{bmatrix}
\gamma_{1,1} \\
\gamma_{2,2} \\
\vdots \\
\gamma_{n_v,n_v} 
\end{bmatrix} 
 \\ 
 & = 
\begin{bmatrix}
\int_{t_0}^t \left(\sum\limits_{\substack{i =1 \\ i \neq 1}}^{n_v}\gamma_{i,1} - \sum\limits_{\substack{j =1 \\ j \neq 1}}^{n_v} \gamma_{1,j}\right) \text{d}\tau\\
\int_{t_0}^t \left(\sum\limits_{\substack{i =1 \\ i \neq 2}}^{n_v}\gamma_{i,2} - \sum\limits_{\substack{j =1 \\ j \neq 2}}^{n_v} \gamma_{2,j}\right) \text{d}\tau\\
\vdots \\
\int_{t_0}^t \left(\sum\limits_{\substack{i =1 \\ i \neq n_v}}^{n_v}\gamma_{i,n_v} - \sum\limits_{\substack{j =1 \\ j \neq n_v}}^{n_v} \gamma_{n_v,j}\right) \text{d}\tau
\end{bmatrix}
, \quad t \geq t_0.
\end{split}
\end{equation}
\begin{remark}
In line with the standard nomenclature adopted in thermal engineering, the mass accumulation or depletion in vertex-compartments is denoted as $\frac{\text{d}}{\text{d}t}m$ and not as $\dot{m}$ \cite{kaminski2017introduction}. Indeed, the latter indicates a mass flow and involves a mass transfer between two vertex-compartments. The first form will be referred to as \emph{mass accumulation-depletion}, whereas the second form will be referred to as \emph{mass flow rate}. The SI unit is kg/s for both quantities. 
\end{remark}

Leveraging the mass-flow matrix (\ref{eq:gammaDef}), we now define the circularity indicator $\lambda$.
\begin{definition}[Graph-based circularity indicator]\label{def:CircIndicator}
The \emph{graph-based circularity indicator} $\lambda(\bm{\Gamma}) \in [0, 1]$ of the network (\ref{def:TMNset}) associated with the mass-flow matrix (\ref{eq:gammaDef}) is 
\begin{equation}\label{eq:lambda}
\lambda(\bm{\Gamma}) = \frac{\sum\limits^{n_\phi}_{k = 1} \text{CM}(\phi_k)}{\sum\limits^{n_\phi}_{k = 1} \text{CM}(\phi_k) + \sum\limits_{\gamma_{i,j} \in \mathcal{Q}}\gamma_{i,j}},
\end{equation}
where $n_\phi$ is the number of directed cycles in $M$,
\begin{equation}
\mathcal{Q} = \{\gamma_{i,j}|\dot{m}_{i,j} \text{ does not belong to any directed cycle}\},
\end{equation}
and
\begin{equation}
\text{CM}(\phi) = \frac{1}{l} \sum\limits_{\gamma_{i,j} \in \mathcal{Y}} \gamma_{i,j} 
\end{equation}
is the \emph{cycle mean} of $\phi$, with 
\begin{equation}
\mathcal{Y} = \{\gamma_{i,j}|\dot{m}_{i,j} \in \phi \}.
\end{equation}
\end{definition}
The next section proposes a methodology for the design of circular material flows using the definitions given above.

\section{Design Methodology}\label{sec:Method}
The proposed methodology is outlined in Fig. \ref{fig:Methodology} and involves three main steps; its output is a TMN, its goal is designing the flow of the material set $\mathcal{B} = \{\beta_1, \dots, \beta_q, \dots, \beta_{n_\beta}\}$, and it has two optional extensions at Steps 2 and 3 indicated by the black arrows.     
\begin{figure}
    \centering
    \includegraphics[width=0.49\textwidth]{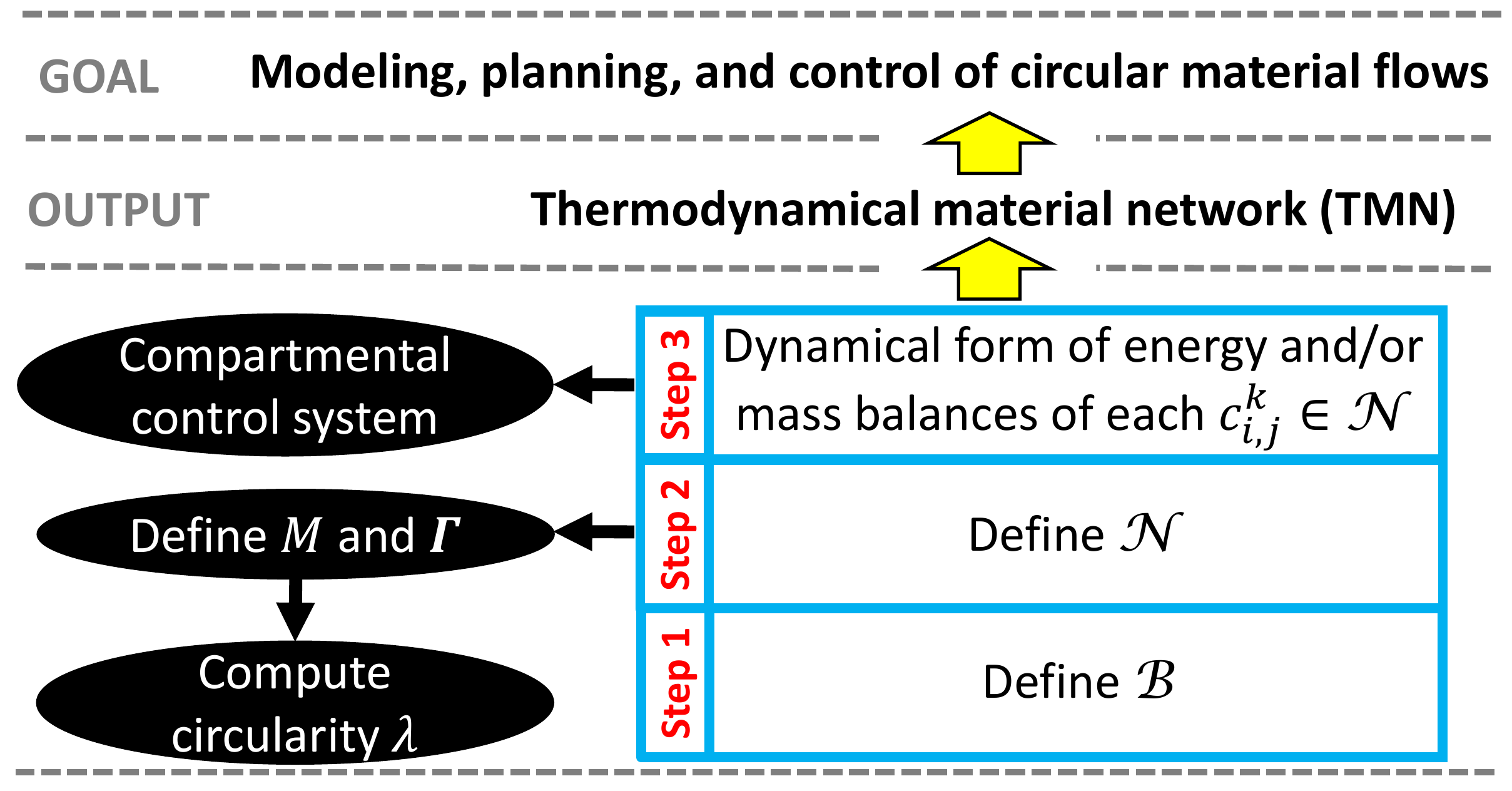}
    \caption{Proposed methodology to design a circular flow of $\mathcal{B}$.}
    \label{fig:Methodology}
\end{figure}
Specifically, the first step is the choice of the material set $\mathcal{B}$ to be circulated. Then, the network (\ref{def:TMNset}) is defined and depicted as needed using a compartmental diagram, a compartmental digraph and/or a mass-flow digraph (an example is in Fig. \ref{fig:TMN-Example1}). Moreover, its circularity can be measured by computing the indicator (\ref{eq:lambda}) as indicated with the black arrow on the left-hand side, which requires to preliminarily define the mass-flow matrix (\ref{eq:gammaDef}). The third and last step consists of applying to each compartment $c^k_{i,j} \in \mathcal{N}$ the dynamical form of the compartmental energy balance, that is, 
\begin{equation}\label{eq:CompPowerBal}
\frac{\text{d}E_k}{\text{d}t} = \frac{\text{d}}{\text{d}t}(K_k + U_k + P_k) = \dot{Q}_k - \dot{W}_k, 
\end{equation} 
and/or the compartmental mass balance (\ref{eq:compMassBalance}). Equation (\ref{eq:CompPowerBal}) is a power balance in which $E_k$, $K_k$, $U_k$, and $P_k$ are the total energy, the kinetic energy, the internal energy, and the potential energy of the compartment $k$, respectively, and $\dot{Q}_k$ and $\dot{W}_k$ are the heat flow and the work flow exchanged by the compartment $k$ with the surroundings. 

Once the dynamics of the compartments are defined, it is possible to implement a compartmental control system as indicated by the black arrow in Step 3 if the power balance (\ref{eq:CompPowerBal}) and the mass balance (\ref{eq:compMassBalance}) are written in state-space form
\begin{equation}
\dot{\bm{x}}_k = \bm{F}(\bm{x}_k, \bm{u}_k), 
\end{equation}     
where $\bm{x}_k \in \mathbb{R}^{n_{k}}$ is the state vector of the $k$-th compartment, $\bm{u}_k \in \mathbb{R}^{z_{k}}$ is the control input of the $k$-th compartment, and $\bm{F} : \mathbb{R}^{n_{k}} \times \mathbb{R}^{z_{k}} \rightarrow \mathbb{R}^{n_{k}}$ is a continuous nonlinear function.

\section{Illustrative Examples}\label{sec:Examples}
\subsection{Circularity Calculation}\label{sub:FirstExample}
This example demonstrates the calculation of the circularity indicator (\ref{eq:lambda}) considering the net $\mathcal{N}$ depicted in Fig. \ref{fig:Example1-circularity}, where $\alpha \in [0, 1]$. 
\begin{figure}
    \centering
    \includegraphics[width=0.49\textwidth]{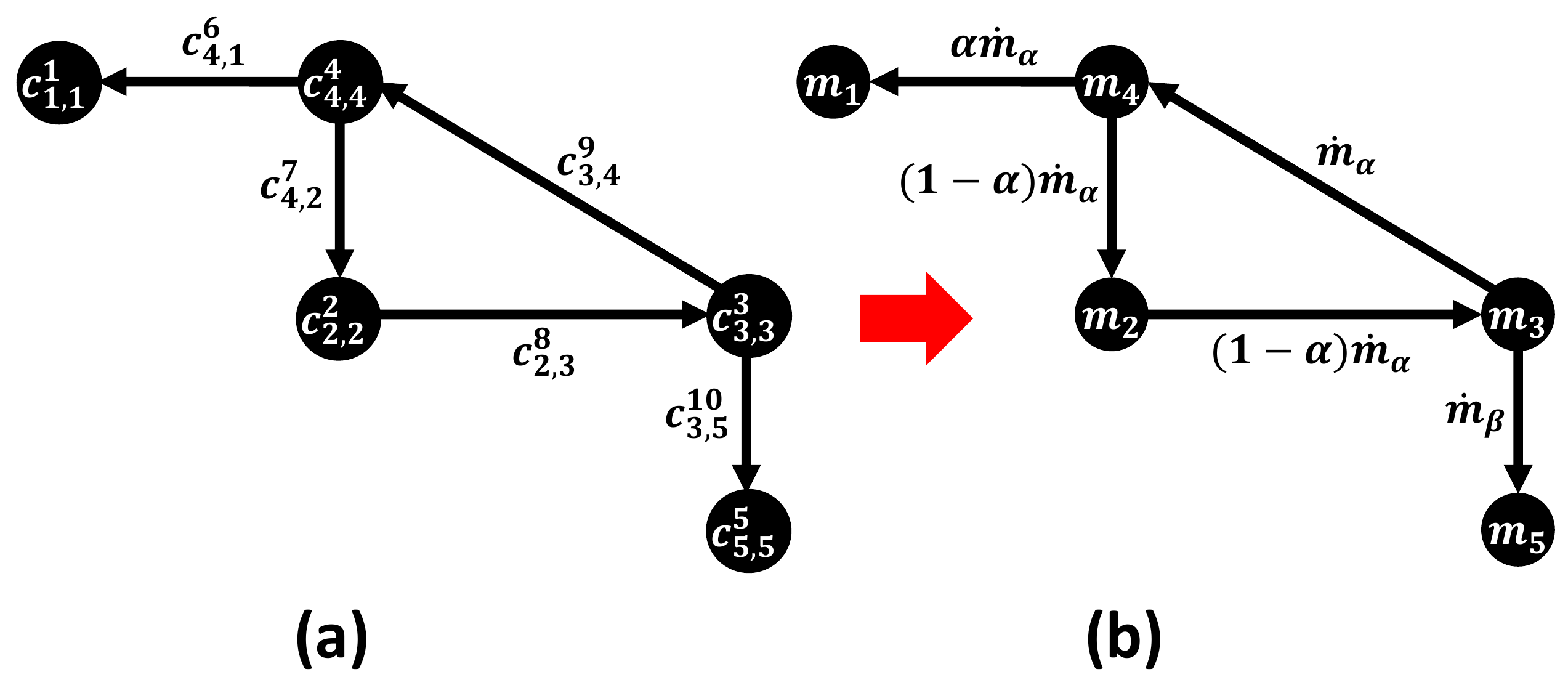}
    \caption{Compartmental digraph in (a) and corresponding mass-flow digraph in (b) considered in Example \ref{sub:FirstExample} with $\alpha \in [0, 1]$.}
    \label{fig:Example1-circularity}
\end{figure}
Assume $m_i = 0|_{i = 2, 3, 4}$. Hence, the mass balance (\ref{eq:constraintOfStocks}) is respected in $c^2_{2,2}$ and $c^4_{4,4}$, while in $c^3_{3,3}$ it yields 
\begin{equation}
(1-\alpha)\dot{m}_{\alpha} = \dot{m}_\beta + \dot{m}_{\alpha} \Rightarrow \dot{m}_\beta = - \alpha \dot{m}_\alpha < 0,    
\end{equation}
which is nonphysical. Therefore, the direction of $c^{10}_{3,5}$ must be inverted as $c^{10}_{5,3}$ to get $\dot{m}_\beta = \alpha \dot{m}_\alpha$. The remaining stocks $m_1$ and $m_5$ follow from the mass balance as $m_1(t) = \int_0^t \alpha \dot{m}_\alpha \text{d}\tau$ and $m_5(t) = - \int_0^t \alpha \dot{m}_\alpha \text{d}\tau$, with the latter requiring that $\alpha \dot{m}_\alpha t \leq m_{5,0}$, where $m_{5,0}$ is the initial stock in $c^5_{5,5}$. With this, all the stocks and flows different from zero are functions of $\dot{m}_\alpha$. 

To calculate the circularity $\lambda$ given by (\ref{eq:lambda}), we observe that for $\alpha = 1$ the network has no cycles, and hence $\lambda = 0$. In contrast, if $\alpha = 0$, then $\mathcal{Q} = \emptyset$, and hence $\lambda = 1$. Finally, for $0 < \alpha < 1$, the net has a cycle with
\begin{equation}
\text{CM}(\phi) = \frac{\dot{m}_\alpha(3 - 2\alpha)}{3},    
\end{equation}
and hence, 
\begin{equation}
\lambda(\alpha) = \frac{3 - 2\alpha}{3+4\alpha}. 
\end{equation}
In summary,
\begin{equation}
\lambda(\alpha) =
\begin{cases}
      0, & \quad \alpha = 1 \\
      \frac{3 - 2\alpha}{3+4\alpha}, & \quad 0 \leq \alpha < 1.  
\end{cases}
\end{equation}  
Figure \ref{fig:LambdaOfAlpha} shows $\lambda(\alpha)$ as a function of $\alpha$.
\begin{figure}
    \centering
    \includegraphics[width=0.49\textwidth]{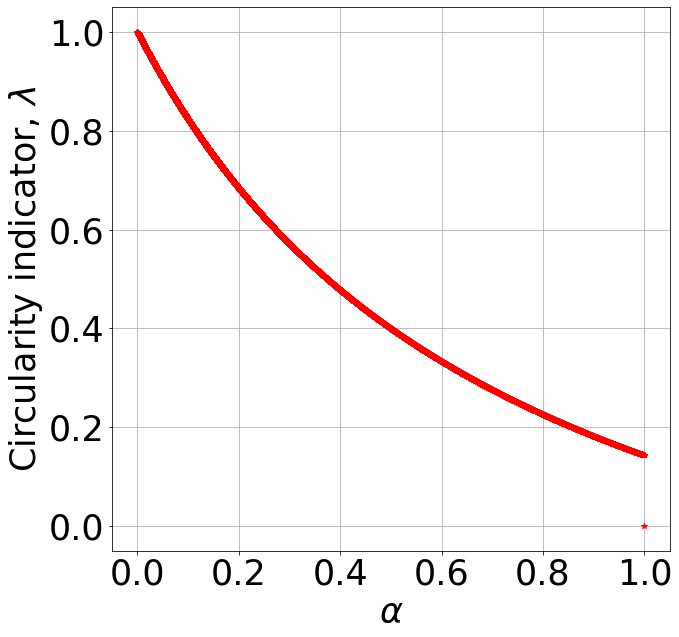}
    \caption{Circularity indicator (\ref{eq:lambda}) as a function of $\alpha$ for the network in Fig. \ref{fig:Example1-circularity}.}
    \label{fig:LambdaOfAlpha}
\end{figure}

\subsection{Subsystem of Bio-Methane Supply Chain}\label{sub:SecondExample}
This example demonstrates the application of the design methodology on a subsystem of a biomethane supply chain. Specifically, the subsystem involves three stages of the biomass life-cycle (see Fig. \ref{fig:BioChain}): the biomass hub, the truck to transport the biomass, and the anaerobic digestion plant to covert the biomass into biogas. Moreover, the anaerobic digestion plant is divided into two sub-compartments: $P_r$ is the plant reservoir of biomass and $P_d$ is the plant digester. We assume that $\bm{G}(t) = [x_G(t), 0, 0]^\top$ (i.e., the material motion is along $x$ only), the position of the hub is $x_h$, $\bm{O} \equiv x_h$, $x_p$ is the position of the chemical plant, and that the sizes of the hub and the plant are negligible compared to $H = x_p - x_h$.      
\begin{figure}
    \centering
    \includegraphics[width=0.49\textwidth]{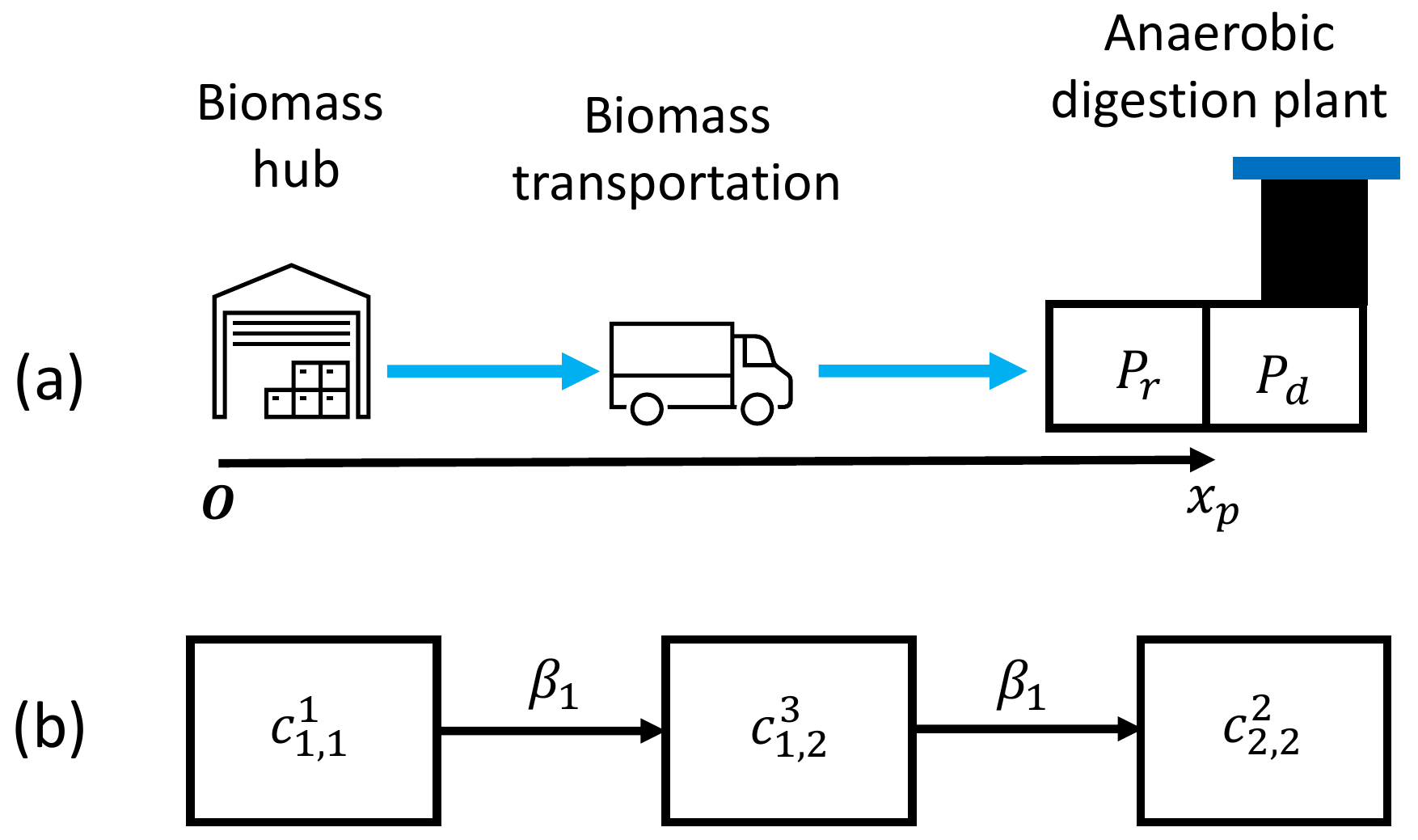}
    \caption{The subsystem under study in Example \ref{sub:SecondExample}: (a) physical representation and (b) compartmental diagram.}
    \label{fig:BioChain}
\end{figure}

The first step of the methodology requires to choose the set of materials of interest, i.e., $\mathcal{B}$; in this case, $\mathcal{B} = \{\beta_1\}$, where $\beta_1$ is the biomass (the details of its chemical composition are not considered in this example). The second step requires to define the net (\ref{def:TMNset}); for this example, $\mathcal{N} = \{c^1_{1,1}, c^2_{2,2}, c^3_{1,2}\}$, $n_c = 3$, $n_v = 2$, and $n_a = 1$. The thermodynamic modeling of each compartment as required at Step 3 is as follows.

\subsubsection{$c^1_{1,1}$ (Biomass hub)} The process of exiting the solid biomass from the hub can be modeled more naturally as a discrete-time system rather than a continuous-time system since the material output flow is carried on in batches instead of as a continuous flow (as it would be with fluids). The discrete-time mass balance for the hub yields
\begin{equation}
m_1(n + 1) - m_1(n) = - \delta_{n_l}(n) m_l, \quad n \in \overline{\mathbb{Z}}_+,
\end{equation}     
where $\overline{\mathbb{Z}}_+$ is the set of nonnegative integers, $m_1$ is the mass stock inside the hub, $m_l$ is the truckload, $n_l$ is the loading time, and $\delta_{n_l}(n)$ is the Kronecker delta, that is,    
\begin{equation}
\delta_{n_l}(n) =
\begin{cases}
      0, & \quad n \neq n_l \\
      1, & \quad n = n_l. 
    \end{cases}
\end{equation}

\subsubsection{$c^3_{1,2}$ (Truck)}
For $n_l \leq n \leq n_u$, the biomass with mass $m_l$ is on the truck, it is transported to the plant, and it enters the plant at the unloading time $n_u$. In this example, we model the truck as the three-wheel vehicle shown in Fig. \ref{fig:GabicciniImage} \cite{GabicciniEsame}. 
\begin{figure}
    \centering
    \includegraphics[width=0.5\textwidth]{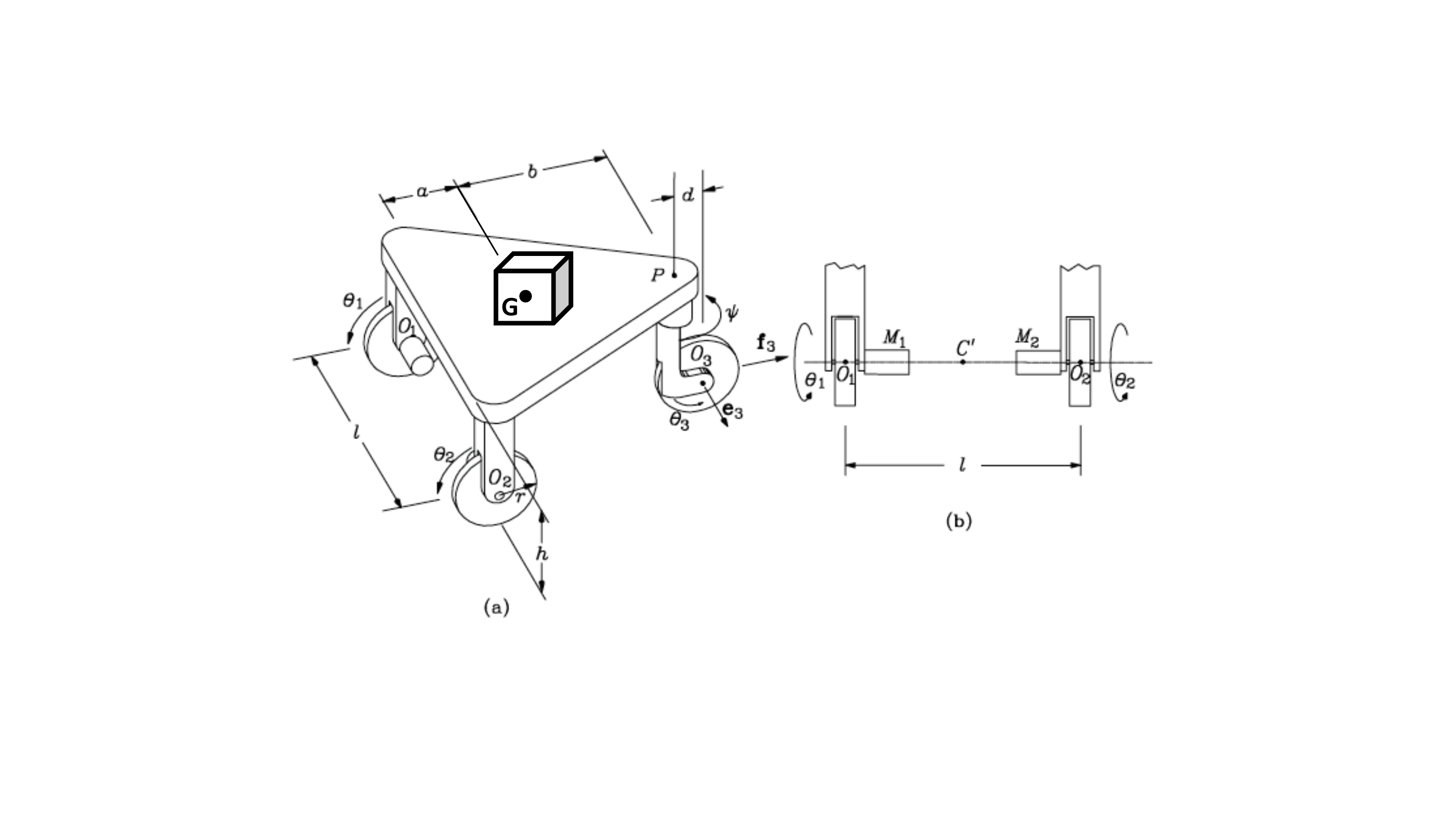}
    \caption{A three-wheel vehicle model of the truck, that is, compartment $c^3_{1,2}$ (modified from \cite{GabicciniEsame}).}
    \label{fig:GabicciniImage}
\end{figure}
The equations of motion of the vehicle can be derived using Lagrange's equations of motion (\hspace{1sp}\cite{siciliano2010robotics})
\begin{equation}\label{eq:LagrMech}
\frac{\text{d}}{\text{d}t}\left(\frac{\partial L}{\partial \dot{\bm{q}}}\right)^\top = \bm{\xi} + \left(\frac{\partial L}{\partial \bm{q}}\right)^\top,
\end{equation} 
where 
\begin{equation}\label{eq:LagranFunct}
L = K - P
\end{equation}
is the Lagrangian function of the mechanical system, $\bm{q} \in \mathbb{R}^d$ is the vector of generalized coordinates, $\dot{\bm{q}} \in \mathbb{R}^d$ is the vector of generalized velocities, and $\bm{\xi} \in \mathbb{R}^d$ is the vector of generalized forces associated with the generalized coordinates $\bm{q}$. Before we use the Lagrangian formulation (\ref{eq:LagrMech}) to define the dynamical equations of the three-wheel vehicle, we demonstrate that the Lagrangian formulation (\ref{eq:LagrMech}) can be derived from the dynamical form of the energy balance (\ref{eq:CompPowerBal}). This is a key result for this paper as it shows that Lagrangian mechanics respects Step 3 of the proposed methodology, which requires that we model each compartment using the dynamical form of the energy and/or the mass balances.     
\begin{proposition}\label{pro:LagMechFromEnBal}
Lagrange's equations of motion given by (\ref{eq:LagrMech}) can be derived from the dynamical form of the energy balance (\ref{eq:CompPowerBal}).  
\end{proposition}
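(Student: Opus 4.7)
My plan is to start from the dynamical energy balance (\ref{eq:CompPowerBal}) specialized to a rigid mechanical compartment, perform the classical chain-rule manipulation that exposes the Euler--Lagrange operator, and then lift the resulting scalar identity to a full vector identity via d'Alembert's principle.

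First I would specialize (\ref{eq:CompPowerBal}): for a rigid mechanical element the internal energy $U_k$ is constant (no thermal or chemical storage is active), the heat flow $\dot{Q}_k$ vanishes, and the outgoing work $\dot{W}_k$ is the mechanical power released to the environment. If $\bm{\xi}$ denotes the generalized forces conjugate to $\bm{q}$, then by definition the external power \emph{into} the compartment equals $-\dot{W}_k = \bm{\xi}^\top \dot{\bm{q}}$, so (\ref{eq:CompPowerBal}) collapses to the mechanical power balance
$$
\frac{d}{dt}(K + P) = \bm{\xi}^\top \dot{\bm{q}}.
$$
Parameterizing $K = K(\bm{q},\dot{\bm{q}})$ and $P = P(\bm{q})$ and expanding the left-hand side by the chain rule, I would then rewrite the acceleration term through the product-rule identity
$$
\frac{\partial K}{\partial \dot{\bm{q}}}\ddot{\bm{q}} = \frac{d}{dt}\!\left(\frac{\partial K}{\partial \dot{\bm{q}}}\dot{\bm{q}}\right) - \frac{d}{dt}\!\left(\frac{\partial K}{\partial \dot{\bm{q}}}\right)\dot{\bm{q}},
$$
invoke Euler's theorem for the 2-homogeneous kinetic form, and use the definition $L = K - P$ from (\ref{eq:LagranFunct}) together with $\partial L/\partial \dot{\bm{q}} = \partial K/\partial \dot{\bm{q}}$. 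After cancellation, the power balance rearranges to the scalar identity
$$
\left[\frac{d}{dt}\!\left(\frac{\partial L}{\partial \dot{\bm{q}}}\right) - \frac{\partial L}{\partial \bm{q}} - \bm{\xi}^\top\right] \dot{\bm{q}} = 0.
$$

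The main obstacle is that a single scalar energy relation cannot by itself pin down $d$ independent component equations: the factorization above is annihilated whenever $\dot{\bm{q}} = 0$, so it is strictly weaker than (\ref{eq:LagrMech}). To promote the bracket to zero at all times, I would invoke d'Alembert's principle and replace $\dot{\bm{q}}$ by an arbitrary virtual velocity compatible with the constraints; since the generalized coordinates are independent by construction, this arbitrariness forces the bracketed row vector to vanish componentwise, which is exactly Lagrange's equation (\ref{eq:LagrMech}). The remaining work is routine bookkeeping, and the argument assembles into a clean two-step sequence: specialization of (\ref{eq:CompPowerBal}) to the conservative mechanical setting, followed by differential identification of the Euler--Lagrange operator.
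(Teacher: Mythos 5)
Your route is genuinely different from the paper's. The paper takes the partial derivative of the reduced power balance (\ref{eq:PowerBalMechanics}) with respect to $\dot{\bm{q}}$, interchanges it with $\mathrm{d}/\mathrm{d}t$, and then identifies the resulting right-hand side with $\bm{\xi} + (\partial L/\partial \bm{q})^\top$ by comparison with (\ref{eq:LagrMech}) itself. You instead expand $\tfrac{\mathrm{d}}{\mathrm{d}t}(K+P)$ by the chain rule, use Euler's theorem for the $2$-homogeneous kinetic energy to expose the Euler--Lagrange operator contracted with the velocity, and land on the scalar identity $\bigl[\tfrac{\mathrm{d}}{\mathrm{d}t}(\partial L/\partial\dot{\bm{q}}) - \partial L/\partial\bm{q} - \bm{\xi}^\top\bigr]\dot{\bm{q}} = 0$. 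That algebra is sound (under the scleronomic assumption that makes $K$ quadratic in $\dot{\bm{q}}$), and it is the classical observation that the energy theorem is the Euler--Lagrange equations dotted with $\dot{\bm{q}}$, here run in reverse.

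The gap is in your final step. The scalar identity you derive holds only along the actual motion, evaluated at the actual $\dot{\bm{q}}(t)$; it is not a statement quantified over all admissible velocities, so you are not entitled to ``replace $\dot{\bm{q}}$ by an arbitrary virtual velocity.'' Licensing that substitution is precisely the content of d'Alembert's principle, which is a postulate logically independent of the energy balance (\ref{eq:CompPowerBal}); what you actually prove is ``energy balance plus d'Alembert implies Lagrange,'' not ``energy balance implies Lagrange.'' The loss of information is real, not a bookkeeping matter: any workless force (a gyroscopic or Lorentz-type force orthogonal to $\dot{\bm{q}}$) can be added to the dynamics without disturbing (\ref{eq:CompPowerBal}) on any trajectory, yet it changes the equations of motion, so the single scalar balance cannot by itself determine the $d$ component equations. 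You partially acknowledge this, but then present the d'Alembert step as closing the gap rather than as an additional hypothesis; it should be stated explicitly as the latter. (The paper's own proof imports outside information at the analogous point --- it uses (\ref{eq:LagrMech}) to identify the right-hand side of (\ref{eq:rewriteWithL}) --- so your version is at least more explicit about where the extra input enters.)
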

\begin{proof}
For simplicity of exposition, consider the dynamical form of the energy balance (\ref{eq:CompPowerBal}) without the subscript $k$ specifying the $k$-th compartment, that is, 
\begin{equation}\label{eq:PowerBal}
\frac{\text{d}}{\text{d}t}(K + U + P) = \dot{Q} - \dot{W}.
\end{equation}
Since the heat flow $\dot{Q}$ and the internal energy $U$ are neglected in solid mechanics, (\ref{eq:PowerBal}) reduces to
\begin{equation}\label{eq:PowerBalMechanics} 
\frac{\text{d}}{\text{d}t}(K + P) = - \dot{W}.
\end{equation}
Note that in solid mechanics the potential energy $P$ corresponds to the conservative work done by the gravitational force, and hence (\hspace{1sp}\cite{siciliano2010robotics}), 
\begin{equation}\label{eq:derivePqdot}
\frac{\partial P}{\partial \dot{\bm{q}}} = 0.
\end{equation}
Taking the partial derivative on both sides of (\ref{eq:PowerBalMechanics}) with respect to $\dot{\bm{q}}$ and transposing the resulting equation yields
\begin{equation}\label{eq:powBalDeriveqdot}
\frac{\text{d}}{\text{d}t}\left[\frac{\partial(K + P)}{\partial \dot{\bm{q}}}\right]^\top = - \left(\frac{\partial \dot{W}}{\partial \dot{\bm{q}}}\right)^\top.
\end{equation} 
It now follows from (\ref{eq:derivePqdot}) that the term on the left-hand side of (\ref{eq:powBalDeriveqdot}) can be written in terms of the Lagrangian function (\ref{eq:LagranFunct}), and hence, (\ref{eq:powBalDeriveqdot}) can be rewritten as 
\begin{equation}\label{eq:rewriteWithL}
\frac{\text{d}}{\text{d}t}\left(\frac{\partial L}{\partial \dot{\bm{q}}}\right)^\top = - \left(\frac{\partial \dot{W}}{\partial \dot{\bm{q}}}\right)^\top. 
\end{equation}

Now, note that the term on the left-hand side of (\ref{eq:rewriteWithL}) is equal to the term on the left-hand side of the Lagrangian formulation (\ref{eq:LagrMech}), and hence,
\begin{equation}\label{eq:equalityRightTerms}
- \left(\frac{\partial \dot{W}}{\partial \dot{\bm{q}}}\right)^\top = \bm{\xi} + \left(\frac{\partial L}{\partial \bm{q}}\right)^\top. 
\end{equation} 
Therefore, (\ref{eq:rewriteWithL}) can be written as (\ref{eq:LagrMech}).
\end{proof}

For the three-wheel vehicle (\ref{eq:LagrMech}) (which can be derived from the power balance (\ref{eq:PowerBal}) as shown in Proposition \ref{pro:LagMechFromEnBal} and thus it respects Step 3 of our methodology) yields \cite{GabicciniEsame}
\begin{equation}\label{eq:TruckDynamics}
\bm{B}
\begin{bmatrix}
\ddot{\theta}_1 \\
\ddot{\theta}_2 \\
\end{bmatrix}
=
\begin{bmatrix}
\tau_1 \\
\tau_2 \\
\end{bmatrix}
,
\end{equation}
where $\tau_1$ and $\tau_2$ are the control torques of the engines $M_1$ and $M_2$, respectively, and 
\begin{equation}
\bm{B} = m_{vl}
\begin{bmatrix}
\left(\frac{a^2r^2}{l^2} + \frac{r^2}{4}\right) + \varepsilon  & \left(-\frac{a^2r^2}{l^2} + \frac{r^2}{4}\right) - \varepsilon \\
\left(-\frac{a^2r^2}{l^2} + \frac{r^2}{4}\right) - \varepsilon & \left(\frac{a^2r^2}{l^2} + \frac{r^2}{4}\right) + \varepsilon \\ 
\end{bmatrix}
, 
\end{equation}
where $m_{vl} = m_v + m_l$, $m_v$ is the vehicle mass, $\varepsilon = I_z \frac{r^2}{l^2}$, and $I_z$ is the principal moment of inertia of the vehicle with respect to its $z$-axis (note the dependence of the truck dynamics (\ref{eq:TruckDynamics}) on the truckload $m_l$). Once $\dot{\theta}_1$ and $\dot{\theta}_2$ are determined by integration of (\ref{eq:TruckDynamics}), $\dot{\psi}$ and $\dot{\theta}_3$ are given by
\begin{equation}\label{eq:nonActuatedJoints}
\begin{bmatrix}
\dot{\theta}_3 \\
\dot{\psi} 
\end{bmatrix}
=
\bm{F}
\begin{bmatrix}
\dot{\theta}_1 \\
\dot{\theta}_2 \\
\end{bmatrix}
,
\end{equation}
where 
\begin{equation}
\bm{F} =
\begin{bmatrix}
\frac{\text{cos }\psi}{2} - \alpha\text{sin }\psi & \frac{\text{cos }\psi}{2} + \alpha\text{sin }\psi \\
\rho\left(- \delta_{-} - \alpha\text{cos }\psi \right) & \rho\left(- \delta_{+} + \alpha\text{cos }\psi \right) \\
\end{bmatrix}
,
\end{equation}
with $\delta_{\pm} = \frac{\text{sin }\psi}{2} \pm \frac{d}{l}$, $\alpha = \frac{a+b}{l}$, and $\rho = \frac{r}{d}$. As mentioned above, here we assume that the material motion is along the $x$-axis only, and hence, $\psi(t) = 0$, $\theta_1(t) = \theta_2(t) = \theta(t)$, and $\tau_1(t) = \tau_2(t) = \tau (t)$. Moreover, $x_{G}(t) = x_{O_2}(t) + a$, where $x_{O_2}(t) = r\theta (t)$, and hence, $\ddot{x}_{G}(t) = \ddot{x}_{O_2}(t)$ and $\ddot{x}_{G}(t) = r\ddot{\theta}(t)$. Thus, the two equations of the system (\ref{eq:TruckDynamics}) are identical and give the dynamics of $\bm{G}$
\begin{equation}
m_{vl} r \ddot{x}_G = 2 \tau,
\end{equation}
which is independent of $I_z$ as the motion is purely translational. Moreover, (\ref{eq:nonActuatedJoints}) gives $\dot{\theta}_3 (t) = \dot{\theta}(t)$ and $\dot{\psi}(t) = 0$.

\subsubsection{$c^2_{2,2}$ (Chemical plant)}    
As showed in Fig. \ref{fig:BioChain}, the anaerobic digestion plant is divided into $P_r$ and $P_d$. The mass balance of $P_r$ in discrete-time yields
\begin{equation}
m_r(n+1) - m_r(n) = \delta_{n_u}(n)m_l, \quad n < n_d, \quad n \in \overline{\mathbb{Z}}_+,
\end{equation}
where $n_d$ is the time in which the reaction in $P_d$ begins and $m_r$ is the mass inside $P_r$. 

For $n \geq n_d$, the sub-compartment $P_r$ becomes a continuous-time system which supplies a continuous flow to the digester $P_d$. The digester $P_d$ is modeled as a continuous stirred tank reactor. Specifically, the anaerobic digestion occurring inside $P_d$ is a four-state dynamical system, which results from the mass balance of the species involved and it assumes a two-stage reaction. Namely, first, the organic substrate $S_1(t)$ is degraded into volatile fatty acids $S_2(t)$ by acidogenic bacteria $X_1(t)$, and then the methanogenic bacteria $X_2(t)$ consume the volatile fatty acids to produce methane CH$_4$ and carbon dioxide CO$_2$ \cite{campos2019hybrid}. The set of four ordinary differential equations of the anaerobic digestion resulting from the mass balance are given by \cite{campos2019hybrid}
\begin{gather}
\begin{align}
\dot{X}_1(t) & = \left[\mu_1(S_1(t)) - \alpha D_1(t)\right]X_1(t), \notag\\ 
& \quad X_1(0) = X_{1,0}, \quad t \geq 0, \\
\dot{X}_2(t) & = \left[\mu_2(S_2(t)) - \alpha D_2(t)\right]X_2(t), \notag\\
&\quad X_2(0) = X_{2,0}, \\
\dot{S}_1(t) & = D_3(t) \left(S_{1\text{in}} - S_1(t)\right) - k_1 \mu_1(S_1(t))X_1(t), \notag\\
& \quad S_1(0) = S_{1,0}, \\
\dot{S}_2(t) & = D_4(t) \left(S_{2\text{in}} - S_2(t)\right) \notag\\
& + k_2 \mu_1(S_1(t))X_1(t) - k_3 \mu_2(S_2(t))X_2(t), \notag\\
& \quad S_2(0) = S_{2,0},
\end{align}
\end{gather}
where
\begin{equation}
\mu_1(S_1(t)) = \mu_{1\text{max}} \frac{S_1(t)}{S_1(t) + K_{\text{S1}}},
\end{equation}
$D_j(t)|_{j=1,2,3,4}$ is the dilution rate for the $j$-th state,
\begin{equation}
\mu_2(S_2(t)) = \mu_{2\text{max}} \frac{S_2(t)}{S_2(t) + K_{\text{S2}} + (S_2(t)/K_{\text{I2}})^2}, 
\end{equation}
and $\alpha, S_{1\text{in}}$, $S_{2\text{in}}$, $k_1$, $k_2$, $k_3$, $\mu_{1\text{max}}$, $\mu_{2\text{max}}$, $K_{\text{S1}}$, $K_{\text{S2}}$, and $K_{\text{I2}}$ are specific constants detailed in \cite{bernard2001dynamical,campos2019hybrid}. The input material flow to $P_d$ supplied by $P_r$ is 
\begin{equation}
\dot{\bm{m}}_{d,\text{in}}(\bm{u}(t)) = \rho_b \bm{u}(t) V_d, 
\end{equation}
where $\rho_b$ and $V_d$ are the biomass density and the digester working volume, respectively, while the biomethane flow $q_M$ produced by the anaerobic digestion is $q_M(S_2(t), X_2(t)) = k_6 \mu_2(S_2(t)) X_2(t)$ \cite{bernard2001dynamical}. Hence, for $n \geq n_d$, the mass balance of $P_r$ yields
\begin{equation}
\frac{\text{d}m_r(t)}{\text{d}t} = - \rho_b V_d \sum_1^4 D_j(t).  
\end{equation}

\subsubsection{Compartmental control}
Here we introduce two compartmental controllers following the black arrow in Step 3 of our methodology (Fig. \ref{fig:Methodology}): one for $c^3_{1,2}$ and one for $c^2_{2,2}$. Note that this can be done at this stage because the dynamics of the compartments are known.
  
The motion of the truck must satisfy $x_G(t_u) = H + a$ and $\dot{x}_G(t_u) = 0$, where $t_u$ is the unloading time. In this way, the truck reaches the plant at the unloading time $t_u$ with null speed. A simple open-loop control law that satisfies the requirements is
\begin{equation}
\tau(t) = u(t) =
\begin{cases}
      \overline{\tau}, & \quad t_l \leq t < \frac{t_u}{2} \\
      -\overline{\tau}, & \quad \frac{t_u}{2} \leq t < t_u,
    \end{cases}
\end{equation}  
where 
\begin{equation}
\overline{\tau} = \frac{2(m_v + m_l)rH}{t_u^2}.
\end{equation}

To regulate the digester to the desired working point, that is, the equilibrium ``SS6'' in \cite{campos2019hybrid}, a nonlinear control system was designed and implemented. Specifically, we implemented the control law of Haddad \emph{et al.} \cite{haddad2015finite} because, for dynamical systems of the form
\begin{equation}\label{eq:affineFormGeneral}
\dot{\bm{x}}(t) = \bm{f}(\bm{x}(t)) + \bm{G}(\bm{x}(t))\bm{u}(t), \quad \bm{x}(0) = \bm{x}_0, 
\end{equation}
where $\bm{f} : \mathbb{R}^{n_{k}} \rightarrow \mathbb{R}^{n_{k}}$ and $\bm{G} : \mathbb{R}^{n_{k}} \rightarrow \mathbb{R}^{n_{k} \times z_{k}}$ are continuous functions with $\bm{f}(0) = 0$, it guarantees the stabilization of the zero solution $\bm{x}(t) \equiv \bm{0}$ of (\ref{eq:affineFormGeneral}) in a finite-time, where $\bm{0} \in \mathbb{R}^{n_{k}}$ is a vector of zeros. In our case, $\bm{x}(t) = [X_1(t), S_1(t), X_2(t), S_2(t)]^\top$ and $\bm{u}(t) = [D_1(t), D_2(t), D_3(t), D_4(t)]^\top$. We designed the controller by choosing the functions $\bm{L}_2(\tilde{\bm{x}})$, $\bm{R}^{-1}_2(\tilde{\bm{x}})$, and $V(\tilde{\bm{x}})$ (in the notation of \cite{haddad2015finite}) as
\begin{equation}
\bm{L}_2(\tilde{\bm{x}}) = 2[\bm{f}^\top(\tilde{\bm{x}})\bm{G}(\tilde{\bm{x}})],
\end{equation}
\begin{equation}
\bm{R}^{-1}_2(\tilde{\bm{x}}) = \bm{G}^{-1}(\tilde{\bm{x}})[\bm{G}^{\top}(\tilde{\bm{x}})]^{-1},
\end{equation}
and 
\begin{equation}
V(\tilde{\bm{x}}) = p^{\frac{2}{3}}(\tilde{\bm{x}}^\top\tilde{\bm{x}})^{\frac{2}{3}},
\end{equation}         
where $\tilde{\bm{x}}(t)$ is the state vector translated to have the desired working point corresponding to the zero solution. This choice requires that $\bm{G}(\tilde{\bm{x}}(t))^{-1}$ exists, but it has the following benefits: first, the condition (33) of \cite{haddad2015finite} simplifies significantly so that it can be easily checked for systems with complex expressions of $\bm{f}(\tilde{\bm{x}}(t))$ and $\bm{G}(\tilde{\bm{x}}(t))$ since now it only depends on $V(\tilde{\bm{x}})$; second, since $\bm{f}(0) = 0$, the condition (34) of \cite{haddad2015finite} is satisfied regardless of the expressions of $\bm{f}(\tilde{\bm{x}}(t))$ and $\bm{G}(\tilde{\bm{x}}(t))$; and third, the dynamics of the closed-loop reduce to the gradient system
\begin{equation}
\dot{\tilde{\bm{x}}}(t) = -\frac{1}{2}[V^\prime(\tilde{\bm{x}}(t))]^\top, \quad \tilde{\bm{x}}(0) = \tilde{\bm{x}}_0,
\end{equation}
which is easy to interpret: the state dynamics $\dot{\tilde{\bm{x}}}(t)$ is proportional to the negative gradient $V^\prime(\tilde{\bm{x}}) = \frac{\partial V(\tilde{\bm{x}})}{\partial\tilde{\bm{x}}}$.

Several simulation results are shown in Fig. \ref{fig:C2andC3}, which were achieved considering the values in Table \ref{tab:valuesOfParam}. 
\begin{figure}
\begin{subfigure}{.245\textwidth}
  \centering
  \includegraphics[width=1\linewidth]{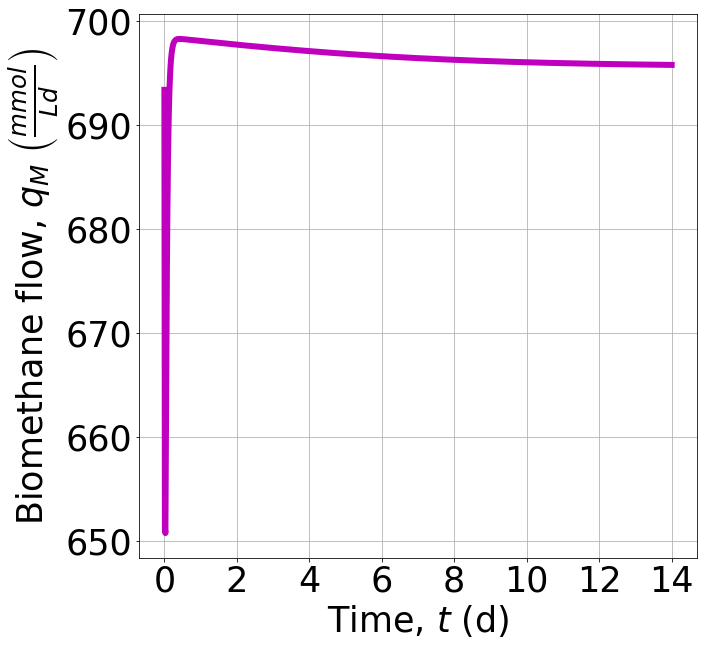}
  \caption{$P_d$, open loop}
  \label{fig:C2-OpenLoop}
\end{subfigure}%
\begin{subfigure}{.245\textwidth}
  \centering
  \includegraphics[width=1\linewidth]{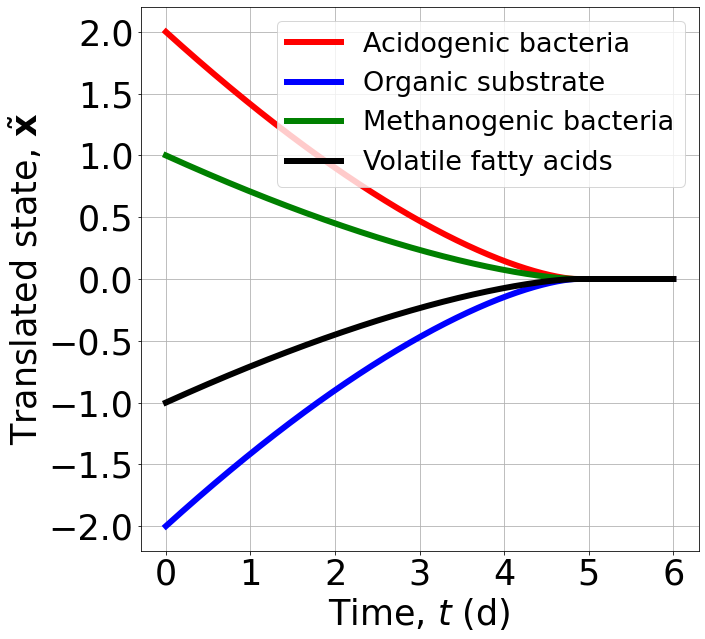}
  \caption{$P_d$, closed loop}
  \label{fig:C2-ClosedLoop1}
\end{subfigure}
\begin{subfigure}{0.24\textwidth}
  \centering
  \includegraphics[width=1\linewidth]{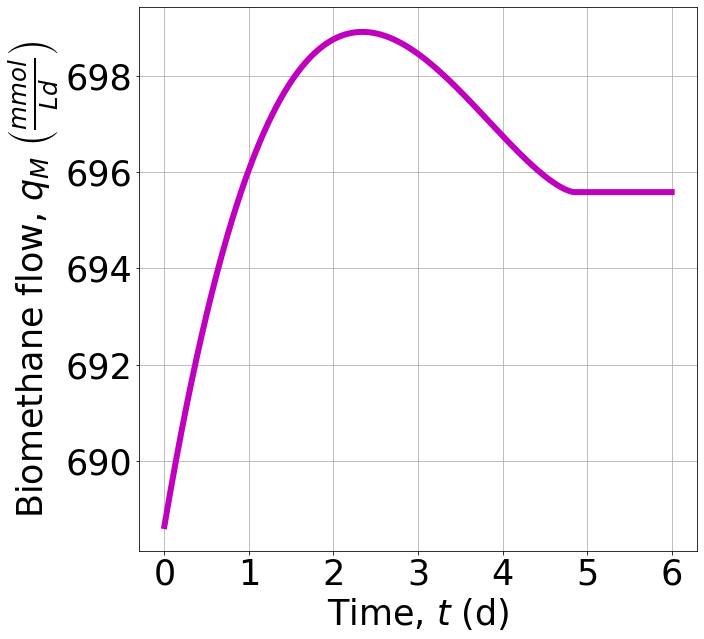}
  \caption{$P_d$, closed loop}
  \label{fig:C2-ClosedLoop2}
\end{subfigure}
\begin{subfigure}{.24\textwidth}
  \centering
  \includegraphics[width=1\linewidth]{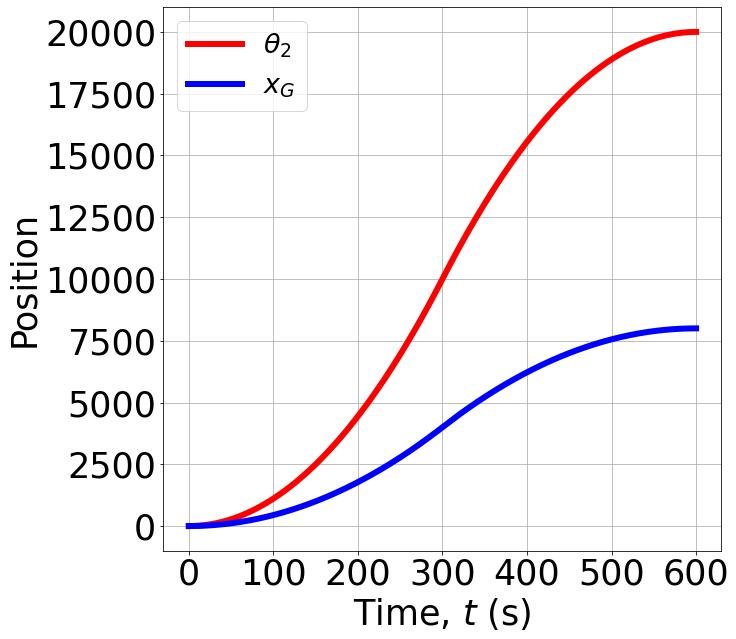}
  \caption{Truck}
  \label{fig:C3}
\end{subfigure}%
\caption{Simulation results of the plant digester $P_d$ and the truck.}
\label{fig:C2andC3}
\end{figure}      
\begin{table*}
\centering
\caption{Values of the parameters used for the numerical study.}
\label{tab:valuesOfParam}
\begin{tabular}{cccc} 
\hline
Compartment & Parameter & Value & Description\\ 
\hline
\hline
\multirow{3}{*}{$c^1_{1,1}$} & $m_l$ & 200 kg & Biomass truckload \\
& $m_{1,0}$ & 5000 kg & Initial hub stock\\
& $n_l$ & 7 & Loading time\\
\hline 
\multirow{11}{*}{$c^3_{1,2}$} & $m_l$ & 200 kg & Biomass truckload \\
& $m_v$ & 3500 kg & Mass of truck without load\\
& $I_z$ & 3000 kg$\text{m}^2$ & Yaw moment of inertia\\
& $a$ & 2 m & Chassis posterior length\\
& $b$ & 3 m & Chassis anterior length\\
& $r$ & 0.4 m & Radius of wheels \\
& $l$ & 2 m & Width of chassis\\
& $d$ & 0.1 m & Interaxis \\
& $H$ & 8000 m & Hub-plant distance \\
& $t_u$ & 600 s & Delivery time \\
& \makecell{$\theta_{1,0}$, $\dot{\theta}_{1,0}$, $\theta_{2,0}$, \\ $\dot{\theta}_{2,0}$, $\theta_{3,0}$, $\psi_{0}$} & 0 rad, 0 $\frac{\text{rad}}{\text{s}}$ & Initial conditions \\   
\hline
\multirow{7}{*}{$c^2_{2,2}$} & \makecell{$\alpha, S_{1\text{in}}$, $S_{2\text{in}}$, $k_1$, \\ $k_2$, $k_3$, $\mu_{1\text{max}}$, $\mu_{2\text{max}}$, \\ $K_{\text{S1}}$, $K_{\text{S2}}$, $K_{\text{I2}}$, $k_6$} & See \cite{bernard2001dynamical,campos2019hybrid} & See \cite{bernard2001dynamical,campos2019hybrid} \\
& $\bm{x}_0$ & $\left[3.43 \frac{\text{g}}{\text{L}}, -0.13 \frac{\text{g}}{\text{L}}, 7.19 \frac{\text{g}}{\text{L}}, 3.74 \frac{\text{mmol}}{\text{L}}\right]^\top$ & Initial conditions in open loop\\
& $\tilde{\bm{x}}_0$ & $\left[2 \frac{\text{g}}{\text{L}}, -2 \frac{\text{g}}{\text{L}}, 1 \frac{\text{g}}{\text{L}}, -1 \frac{\text{mmol}}{\text{L}}\right]^\top$ & Initial conditions in closed loop\\ [0.1cm]
 & $p$ & 1 & See \cite{haddad2015finite}\\ [0.1cm]
 & $\overline{D}$ & 0.5 $\frac{1}{\text{day}}$ & Dilution rate at equilibrium ``SS6''\\
\hline 
\end{tabular}
\end{table*}

\section{Conclusion}\label{sec:Conclusion}
The long-term unsustainability of the current take-make-dispose economy requires to redesign the material flows across natural and built environments. This paper establishes the foundations of a thermodynamics-based material flow design towards circularity by integrating the well-established design approach of thermodynamic cycles (e.g., Rankine cycle) with graph theory, dynamical systems, and control. Our examples demonstrate both the theoretical efficacy and the applicability of the proposed methodology. 

Future work will consist of designing TMNs by starting with few compartments and targeting materials whose circularity has high priority, e.g., atmospheric carbon dioxide and critical raw materials \cite{CRMs-EU,CRMs-US}. With respect to the existing literature, the goal is to complement MFA with dynamical power balances, graph theory, and control systems.

\bibliographystyle{IEEEtran}
\bibliography{mybibfile}

\vfill

\end{document}